\newtheorem{thm}{Theorem}
\newtheorem{lemma}{Lemma}
\newtheorem{remark}[thm]{Remark}
\begin{document}

\title[Equation $A!B!=C!$]{Explicit bounds for the diophantine equation $A!B!=C!$}
\author{Laurent Habsieger}
\address{Universit\'e de Lyon, CNRS UMR 5208, Universit\' e Claude Bernard Lyon 1,
Institut Camille Jordan, 43 boulevard du 11 novembre 1918, 69622 Villeurbanne Cedex, France \bigskip}
\email{habsieger@math.univ-lyon1.fr}

\begin{abstract} A nontrivial solution of the equation $A!B!=C!$ is a triple of positive integers $(A,B,C)$ with $A\le B\le C-2$. It is conjectured that the only nontrivial solution is $(6,7,10)$,
and this conjecture has been checked up to $C=10^6$. Several estimates on the relative size of the parameters are known, such as the one given by Erd\"os $C-B\le5\log\log C$, 
or the one given by Bhat and Ramachandra $C-B\le(1/\log2+o(1))\log\log C$. We check the conjecture for $B\le10^{3000}$ 
and give better explicit bounds such as $C-B\le\frac{\log\log(B+1)}{\log2}-0.8803$.
\end{abstract}

\maketitle

\section{Introduction}

Many authors \cite{G} considered the diophantine equation
\begin{equation}\label{E}
n!=\prod_{i=1}^r a_i!
\end{equation}
in the integers $r,a_1,\dots,a_r$,  with $r\ge2$ and $a_1\ge\dots \ge a_r\ge2$. A trivial solution is given by $a_1=n-1$ and $n=\prod_{i=2}^r a_i!$. Hickerson conjectured that the only
non-trivial solutions are $9!=7!3!3!2!$, $10!=7!6!=7!5!3!$ and $16!=14!5!2!$. He checked it for $n\le410$, which was improved to $18160$ by Shallit and Easter (see \cite{G}).
Sur\'anyi also conjectured the case $r=2$ (see \cite{E75}) and this was verified up to  $n=10^6$ by Caldwell \cite{C}.

Luca \cite{L} proved there are finitely many non-trivial solutions to \eqref{E}, 
assuming the $abc$- conjecture. Erd\"os \cite{E75} showed that, if the largest prime number of $n(n+1)$ is greater than
$4\log n$ for any positive integer $n$, then there are only finitely many nontrivial solutions to  \eqref{E}.

From now on, we shall focus on the case $r=2$, i. e. the equation
\begin{equation}\label{*}
A!B!=C!\,,
\end{equation}
which has been studied by Caldwell \cite{C} for $C\le 10^6$. Erd\"os \cite{E93} proved 
that $C-B\le5\log\log C$ for $C$ sufficiently large, and noted that it would be nice to obtain a bound of the form
$C-B=o(\log\log C)$. His result was improved by Bhat and Ramachandra \cite{BR}, who showed that $C-B\le(1/\log2+o(1))\log\log C$. 
Hajdu, Papp and Szak\'acs \cite{HPS} recently proved that non-trivial solutions different from $10!=7!6!$ satisfy to $C<5(B-A)$ and $B-A\ge10^6$.
The aim of this paper is to get better explicit inequalities.

Let $a\ge2$ be an integer. Let $s_a$ denote the sum of the digits of an integer written in the basis $a$. 
When $p$ is a prime, Legendre's formula gives the exponent of $p$ in $n!$:
$$v_p(n!)=\frac{n-s_p(n)}{p-1}\,.$$
When we apply this formula to \eqref{*}, we find $A-v_p(A)+B-v_p(B) =C-v_p(C)$. Since $v_p(C)\ge1$ and $v_p(n)\le \frac{(p-1)\log(n+1)}{\log p}$ (see Lemma 1 below), we obtain 
\begin{equation}\label{lowerC}
C\ge A+B+1-\frac{\log(A+1)}{\log2}-\frac{\log(B+1)}{\log2}\,.
\end{equation}
Since $\log C!= \log A! + \log B!$, the condition \eqref{lowerC} implies that $A$ is much smaller that $B$. We shall make this assertion explicit by proving the following theorem.

\begin{thm}\label{main1} Let $(A,B,C)\neq(6,7,10)$ be a nontrivial solutions triple of \eqref{*}.
For any real number $t>-1-\frac{1+2\log\log2}{\log2}=-1.3851\dots$ we have
$$A\le \frac{\log(B+1)}{\log2}+\frac{2\log\log(B+1)}{\log2}+t$$
when $B$ is sufficiently large. Moreover we have
$$A\le \frac{\log(B+1)}{\log2}+\frac{2\log\log(B+1)}{\log2}+2.1221\,.$$
\end{thm}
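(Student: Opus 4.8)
The plan is to play the lower bound \eqref{lowerC} for $C$ against an upper bound for $C-B$ coming from the factorisation of $A!$, and then to feed in Stirling's formula. Write $k=C-B$, so $k\ge 2$ since the solution is nontrivial. From \eqref{*} one has $A!=C!/B!=\prod_{j=1}^{k}(B+j)$, and since every one of these $k$ factors is at least $B+1$ this gives $A!\ge(B+1)^{k}$, hence
$$C-B=k\le\frac{\log A!}{\log(B+1)}\,.$$
Comparing with \eqref{lowerC}, rewritten as $C-B\ge A+1-\frac{\log(A+1)}{\log2}-\frac{\log(B+1)}{\log2}$, and eliminating $C-B$, one is left with a single inequality relating $A$ and $B$ only:
$$\Big(A+1-\frac{\log(A+1)}{\log2}\Big)\log(B+1)-\frac{\log^{2}(B+1)}{\log2}\le\log A!\,.$$

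The next step is to bound $\log A!$ from above by Stirling, $\log A!\le A\log A-A+\tfrac12\log(2\pi A)+\tfrac1{12A}$, and to rearrange, turning the previous line into
$$A\big(\log(B+1)+1-\log A\big)\le\frac{\log(B+1)}{\log2}\big(\log(B+1)+\log(A+1)-\log2\big)+\tfrac12\log(2\pi A)+\tfrac1{12A}\,,$$
which I would then analyse as a constraint on $A$ for fixed $B$. The analysis has two stages. A crude estimate first: the left side grows like $A\log(B+1)$ while the right side is $\sim\log^{2}(B+1)/\log2$, so the inequality already forces $A=(1+o(1))\log(B+1)/\log2$, whence $\log A=\log\log(B+1)-\log\log2+o(1)$. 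Feeding this back and setting $A=\log(B+1)/\log2+u$, the terms of order $\log(B+1)$ cancel and one is left, for $B$ large, with $u\le\frac{2\log\log(B+1)}{\log2}+t$ for every $t>-1-\frac{1+2\log\log2}{\log2}$, the required lower bound on $B$ depending on $t$. The coefficient $2$ of $\log\log(B+1)$ is the sum of two contributions: one $\log\log(B+1)$ from the $\log(A+1)$ term carried over from \eqref{lowerC}, i.e. from the estimate $v_{2}(A)\le\log_{2}(A+1)$ of Lemma 1, and one from the logarithmic factor in Stirling's formula. This gives the first inequality of the theorem.

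For the explicit bound I would run the same computation keeping every remainder effective: once a preliminary (non-optimal) bound on $A$ is in hand, $\log A$ and $\log(A+1)$ are controlled explicitly in terms of $\log(B+1)$, Stirling is applied with its stated error, and one checks that the displayed constraint forces $A\le\frac{\log(B+1)}{\log2}+\frac{2\log\log(B+1)}{\log2}+2.1221$ as soon as $B$ exceeds an explicit threshold of moderate size. For the finitely many smaller values of $B$ the statement is vacuous: a nontrivial solution satisfies $C!=A!B!\le(B!)^{2}<(2B)!$, so $C<2B$, hence a solution with such a small $B$ would have small $C$ and is ruled out by the computational verification of the conjecture (Caldwell for $C\le 10^{6}$, and a fortiori the verification carried out here), while $(6,7,10)$ is excluded by hypothesis.

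The step I expect to be the real work is this last, effective one: turning the transparent asymptotic argument into a uniform estimate with the precise constant $2.1221$. This requires controlling simultaneously all the lower-order pieces (the $\tfrac12\log(2\pi A)$ and $\tfrac1{12A}$ from Stirling, the $\log(A+1)-\log2$ correction stemming from $v_{2}(A)$ and $v_{2}(C)$, and the gap between $\log(B+1)$ and $\log B$), checking that the "excess'' $A-\frac{\log(B+1)+2\log\log(B+1)}{\log2}$ allowed by the constraint stays below $2.1221$ on the whole range of $B$ left uncovered by the verification, and confirming that this range is indeed so covered. The inequalities themselves are elementary; the delicacy lies entirely in the quantitative bookkeeping.
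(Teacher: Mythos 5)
Your argument for the first (asymptotic) inequality is sound and is in substance the paper's: both combine the dyadic bound \eqref{lowerC} with Stirling's formula. You package it differently — you eliminate $C-B$ via $A!=\prod_{j=1}^{k}(B+j)\ge(B+1)^{k}$ and then analyse a single scalar constraint on $A$ by bootstrapping, whereas the paper works with $R(A,B)=\log\Gamma\bigl(A+B+2-\frac{\log(A+1)}{\log2}-\frac{\log(B+1)}{\log2}\bigr)-\log\Gamma(A+1)-\log\Gamma(B+1)$, shows $R\le0$ on solutions, proves via $\Psi$ and $\Psi'$ that $R$ is increasing in $A$ for $1\le A\le B$, and then only has to check $R(A_t,B)>0$ (the key lemma). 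Your route avoids the monotonicity lemma, which is legitimate, but the bootstrap must be made explicit: as written, ``the right side is $\sim\log^2(B+1)/\log2$'' presupposes $\log(A+1)=o(\log(B+1))$, so you first need the crude step $A=O(\log^2(B+1))$ (available because $A\le B$ gives $\log A<\log(B+1)$, so your left side exceeds $A$ while the right side is $O(\log^2(B+1))$), and then a second pass with $A-\frac{\log(B+1)}{\log2}=O(\log\log(B+1))$ before the constant can be extracted. That is a fixable presentational gap, not a mathematical one.

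The genuine gap is the second, explicit inequality with the constant $2.1221$, which you defer as ``the real work'' and never carry out — and that computation is precisely the content of the theorem's second claim. In the paper it rests on two inputs: the bound $B\ge10^6$, imported from Hajdu--Papp--Szak\'acs ($B-A\ge10^6$), and the numerical verification that the margin is still positive there, namely $C(2.1221,10^6)>0.000016$; in other words $2.1221$ is calibrated to the threshold $10^6$ with essentially no slack. Your proposed treatment of small $B$ — $C!\le(B!)^2<(2B)!$, hence $C<2B$, then Caldwell's verification of $C\le10^6$ — forces your explicit threshold down to $5\cdot10^5$, and it is not at all clear that $2.1221$ survives at that threshold: lowering the threshold costs more than the tiny margin available at $10^6$, and whatever you gain from keeping the sharper Stirling term $\tfrac12\log(2\pi A)$ (the paper discards it, using $\log\Gamma(x)\le x(\log x-1)$ for $x\ge6.448$) is left uncomputed. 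So either quote $B\ge10^6$ directly from Hajdu--Papp--Szak\'acs, as the paper does, or actually perform the explicit estimate at your own threshold; as it stands, the second displayed inequality of the theorem is asserted rather than proved.
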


We can slightly improve on Bhat and Ramachandra's result \cite{BR}.
\begin{thm}\label{main2} Let $(A,B,C)\neq(6,7,10)$ be a nontrivial solution triple of \eqref{*}.
For any real number $u>-\frac{1+\log\log2}{\log2}=-0.9139\dots$, we have
$$C-B\le\frac{\log\log(B+1)}{\log2}+u$$
when $B$ is sufficiently large. Moreover we have
$$C-B\le\frac{\log\log(B+1)}{\log2}+1.819\,.$$
\end{thm}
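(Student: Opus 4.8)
The plan is to leverage Theorem \ref{main1}, which we may assume, together with the fundamental identity $\log C! = \log A! + \log B!$ and the lower bound \eqref{lowerC}. First I would set $D = C - B$ and rewrite $\log C! - \log B! = \log A!$ as a telescoping sum: $\sum_{j=1}^{D}\log(B+j) = \log A!$. Since each factor satisfies $\log(B+j) \ge \log(B+1)$, this immediately gives $D\log(B+1) \le \log A!$, hence $D \le \frac{\log A!}{\log(B+1)}$. Now I would invoke the bound on $A$ from Theorem \ref{main1}: writing $A \le \frac{\log(B+1)}{\log 2} + \frac{2\log\log(B+1)}{\log 2} + t$ (with $t$ chosen near its infimum $-1.3851\dots$ in the asymptotic regime), apply Stirling's formula $\log A! = A\log A - A + O(\log A)$ to estimate $\log A!$.

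The key computation is then asymptotic: with $A \approx \frac{\log(B+1)}{\log 2}$ to leading order, we get $\log A \approx \log\log(B+1) - \log\log 2$, so
$$\log A! \approx A(\log A - 1) \approx \frac{\log(B+1)}{\log 2}\bigl(\log\log(B+1) - \log\log 2 - 1\bigr) + (\text{lower order}).$$
Dividing by $\log(B+1)$ yields $D \le \frac{\log\log(B+1)}{\log 2} - \frac{1 + \log\log 2}{\log 2} + o(1)$, which is exactly the claimed asymptotic form with $u$ approaching $-\frac{1+\log\log 2}{\log 2} = -0.9139\dots$. The second-order terms from the $\frac{2\log\log(B+1)}{\log 2}$ correction in $A$ and from the refinement $\log A \approx \log\log(B+1) + \frac{2\log\log\log(B+1)}{\log\log(B+1)} - \log\log 2$ contribute terms that vanish as $B\to\infty$, so they only affect the constant $u$ through the "sufficiently large" hypothesis, not the asymptotic rate.

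For the explicit bound $C - B \le \frac{\log\log(B+1)}{\log 2} + 1.819$, I would need to make every step effective. This requires: an explicit form of Stirling, say $\log A! \le A\log A - A + \frac{1}{2}\log(2\pi A) + \frac{1}{12A}$; the explicit bound $A \le \frac{\log(B+1)}{\log 2} + \frac{2\log\log(B+1)}{\log 2} + 2.1221$ from Theorem \ref{main1}; and a careful case split, since for moderate $B$ the paper's stated verification up to $B \le 10^{3000}$ handles the range where the asymptotic estimates are too weak, while for $B > 10^{3000}$ the quantities $\log(B+1)$ and $\log\log(B+1)$ are large enough that the error terms are controlled by explicit constants. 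The main obstacle is bookkeeping: tracking how the $2.1221$ in the bound for $A$, the $\frac{1}{2}\log(2\pi A)$ Stirling tail, and the difference between $\log(B+j)$ and $\log(B+1)$ (which one might want to exploit for a better constant, or simply discard) combine to produce precisely $1.819$ rather than something slightly larger — this is a delicate optimization where one must choose the threshold $10^{3000}$ large enough that the leftover $o(1)$ terms fit under the gap between $1.819$ and the asymptotic constant $-0.9139\dots$, yet consistent with the computational verification actually performed.
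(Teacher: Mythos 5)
Your proposal follows essentially the same route as the paper: the lower bound $(C-B)\log(B+1)\le\log A!$ from $C!/B!=\prod_{j=1}^{C-B}(B+j)$, combined with the upper bound on $\log A!$ obtained by applying Stirling's formula to $\Gamma(A_t+1)$ with $A\le A_t$ from Theorem \ref{main1} (this is exactly the $\varphi_2$ estimate already established in the proof of Lemma \ref{key}), then dividing by $\log(B+1)$. One correction on the explicit constant: the paper gets $1.819$ using only $B\ge10^6$ (from Hajdu--Papp--Szak\'acs) together with monotonicity of the error term $\varphi_2(2.1221,\cdot)$, and your plan to handle moderate $B$ via the verification up to $10^{3000}$ would be circular, since that verification (Theorem \ref{main4}) is itself derived from Theorems \ref{main1}--\ref{main3}.
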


We also deduce a better explicit estimate than $B-A>C/5$ given by Hajdu, Papp and Szak\'acs \cite{HPS}.
\begin{thm}\label{main3} Let $(A,B,C)\neq(6,7,10)$ be a nontrivial solution triple of \eqref{*}.
For any real number $v<1+\frac{2+3\log\log2}{\log2}=2.299\dots$, we have
$$B-A>C-\frac{\log(C+1)}{\log2}-\frac{3\log\log(C+1)}{\log2}+v$$
when $B$ is sufficiently large. Moreover we have
$$B-A>C-\frac{\log(C+1)}{\log2}-\frac{3\log\log(C+1)}{\log2}-3.9411\,.$$
\end{thm}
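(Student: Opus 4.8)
The plan is to obtain Theorem~\ref{main3} as a direct consequence of Theorems~\ref{main1} and~\ref{main2}. Write $B-A = C-(C-B)-A$, so that a lower bound for $B-A$ follows from upper bounds for $C-B$ and for $A$. Theorem~\ref{main2} gives an upper bound for $C-B$ in terms of $B$, and Theorem~\ref{main1} gives one for $A$ in terms of $B$; since $B\le C-2$, we have $\log(B+1)\le\log(C+1)$ and $\log\log(B+1)\le\log\log(C+1)$ (for $B\ge2$), and replacing $B$ by the larger quantity $C$ inside these upper bounds only weakens them. Hence both bounds remain valid with every occurrence of $B$ replaced by $C$, which is exactly the shape needed.

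For the explicit statement I would just add constants. From Theorem~\ref{main1}, $A\le\frac{\log(C+1)}{\log2}+\frac{2\log\log(C+1)}{\log2}+2.1221$, and from Theorem~\ref{main2}, $C-B\le\frac{\log\log(C+1)}{\log2}+1.819$, so
$$B-A \;=\; C-(C-B)-A \;\ge\; C-\frac{\log(C+1)}{\log2}-\frac{3\log\log(C+1)}{\log2}-(1.819+2.1221),$$
and $1.819+2.1221=3.9411$, giving the claimed inequality.

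For the asymptotic statement, apply the asymptotic forms of Theorems~\ref{main1} and~\ref{main2} with parameters $t$ and $u$, yielding, for $B$ large,
$$B-A \;\ge\; C-\frac{\log(B+1)}{\log2}-\frac{3\log\log(B+1)}{\log2}-(t+u).$$
The only point that needs a short argument is the passage from $\log(B+1)$ to $\log(C+1)$: by Theorem~\ref{main2}, $0\le C-B\le\frac{\log\log(B+1)}{\log2}+1.819$, so $\log(C+1)-\log(B+1)=\log\bigl(1+\tfrac{C-B}{B+1}\bigr)\to0$ and likewise $\log\log(C+1)-\log\log(B+1)\to0$ as $B\to\infty$; thus $B-A\ge C-\frac{\log(C+1)}{\log2}-\frac{3\log\log(C+1)}{\log2}-(t+u)+o(1)$. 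Since $t$ may be taken arbitrarily close to $-1-\frac{1+2\log\log2}{\log2}$ and $u$ arbitrarily close to $-\frac{1+\log\log2}{\log2}$, the quantity $-(t+u)$ can be made to exceed any prescribed $v<1+\frac{2+3\log\log2}{\log2}$, and for $B$ large enough the $o(1)$ term is dominated by the resulting slack, giving the strict inequality. There is no genuine obstacle beyond bookkeeping: the mildly delicate points are noting that the threshold $1+\frac{2+3\log\log2}{\log2}$ for $v$ is precisely the sum of the thresholds for $-t$ and $-u$ in the two earlier theorems, so that combining them is lossless, and confirming that the error from replacing $B$ by $C$ is truly $o(1)$ and does not erode the constant.
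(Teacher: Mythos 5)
Your proposal is correct and follows essentially the same route as the paper: write $B-A=C-A-(C-B)$, apply Theorems~\ref{main1} and~\ref{main2}, and pass from $B$ to $C$ in the bounds, with the constant $3.9411=2.1221+1.819$ and the threshold for $v$ being the sum of the thresholds for $-t$ and $-u$. The only difference is that your $o(1)$ argument for replacing $\log(B+1)$ by $\log(C+1)$ is unnecessary: since $B+1\le C+1$, monotonicity alone already gives $-\frac{\log(B+1)}{\log2}-\frac{3\log\log(B+1)}{\log2}\ge-\frac{\log(C+1)}{\log2}-\frac{3\log\log(C+1)}{\log2}$, which is the direction you need.
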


All these general estimates used the fact that $B\ge 10^6$ for nontrivial solutions triple distinct from $(6,7,10)$. We use these estimates to improve  
both on the range of validity of Sur\'anyi's conjecture and the estimates given before.

\begin{thm}\label{main4} Let $(A,B,C)\neq(6,7,10)$ be a nontrivial solution triple of \eqref{*}. Then we have $B\ge 10^{3000}$ and
$$\begin{aligned}
A&\le \frac{\log(B+1)}{\log2}+\frac{2\log\log(B+1)}{\log2}-1.3479\,,\\
C-B&\le\frac{\log\log(B+1)}{\log2}-0.8803\,,\\
B-A&>C-\frac{\log(C+1)}{\log2}-\frac{3\log\log(C+1)}{\log2}+2.2282\,.\\
\end{aligned}$$
\end{thm}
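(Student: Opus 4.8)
The plan is a two-stage argument. First I would carry out a finite (though substantial) computer verification extending Sur\'anyi's conjecture from the known range $B\le10^6$ to $B\le10^{3000}$; then I would feed the resulting bound $B\ge10^{3000}$ back into the proofs of Theorems~\ref{main1}, \ref{main2} and~\ref{main3} to obtain the three sharper explicit constants.

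For the verification, recall that every nontrivial solution triple distinct from $(6,7,10)$ already satisfies $B\ge10^6$, so Theorems~\ref{main1} and~\ref{main2} apply to it. If moreover $B\le10^{3000}$, then, since the right-hand sides of those estimates are increasing in $B$, a direct computation gives $A\le9994$ and $C-B\le14$, while nontriviality gives $C-B\ge2$. After cancelling $B!$, the equation $A!B!=C!$ with $k:=C-B$ becomes the single identity
$$A!=(B+1)(B+2)\cdots(B+k).$$
For each of the finitely many pairs $(A,k)$ with $2\le A\le9994$ and $2\le k\le14$ the right-hand side is strictly increasing in $B$, so one computes $A!$ in exact multiprecision arithmetic, locates by bisection the unique real $B^\ast$ solving $\prod_{j=1}^k(B^\ast+j)=A!$, and checks that neither $\lfloor B^\ast\rfloor$ nor $\lceil B^\ast\rceil$ satisfies the identity with $10^6\le B\le10^{3000}$. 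In practice most pairs are eliminated instantly by crude size constraints: $k\ge2$ and $B\ge10^6$ force $A!>10^{12}$, hence $A\ge15$, while comparing $\log A!$ with $k\log(2\cdot10^{3000})$ bounds $k$ from below for each $A$, leaving only a thin band of pairs to test in earnest. Combined with Caldwell's check for $C\le10^6$, this yields $B>10^{3000}$ for every nontrivial solution triple other than $(6,7,10)$.

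It then remains to redo the proofs of Theorems~\ref{main1}--\ref{main3} with $10^{3000}$ in place of $10^6$. Each of those proofs produces an estimate of the asserted shape in which the additive constant is a fixed, eventually decreasing function of $B$ converging to the limiting value ($-1.3851\dots$, $-0.9139\dots$, $2.299\dots$) as $B\to\infty$; the constants $2.1221$, $1.819$ and $-3.9411$ are simply what that function returns when the only available lower bound is $B\ge10^6$. Re-evaluating it at $B=10^{3000}$---where $\log(B+1)$ is roughly $500$ times larger---shrinks the error terms drastically and produces the constants $-1.3479$, $-0.8803$ and $2.2282$ in the three displayed inequalities.

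The principal obstacle is making the verification airtight rather than heuristic: one must prove the bounds $A\le9994$ and $C-B\le14$ (not merely observe them numerically), certify that the multiprecision root extractions use correct directed rounding so that ``$B^\ast$ is not an integer'' is a rigorous conclusion, and confirm that the enumeration over $(A,k)$ is complete. The arithmetic involved is heavy---factorials up to $10^4!$, integers with tens of thousands of digits---but entirely routine, and no number-theoretic input beyond Theorems~\ref{main1} and~\ref{main2} enters.
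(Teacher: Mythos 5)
Your proposal follows essentially the same route as the paper: use the explicit constants of Theorems \ref{main1}--\ref{main2} (valid since $B\ge10^6$) to bound $A$ and $k=C-B$ on the range $10^6\le B\le10^{3000}$, eliminate every remaining pair $(A,k)$ by an exact computation of the unique integer candidate $B$ solving $A!=(B+1)\cdots(B+k)$, and then re-evaluate the constants with $B\ge10^{3000}$ to get $-1.3479$, $-0.8803$ and $2.2282$. The only differences are implementation details: the paper bootstraps through an intermediate threshold $10^{1000}$ so that the computation stays within $A\le10^4$ and $k\le12$ (rather than your single pass up to $k\le14$), and it identifies the candidate in closed form as $B_k(A)=\lceil (A!)^{1/k}-(k+1)/2\rceil$ via a polynomial-positivity check instead of your bisection with directed rounding.
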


\begin{remark} Caldwell's result $C\ge 10^6$ concerning Sur\'anyi's conjecture is extended to the much larger region $C\ge 10^{3000}$.
\end{remark}

We first establish useful general properties for the sum of digits and for the $\Gamma$ function in the next section. In section 3, we prove a key lemma that studies the asymptotic 
behaviour of $\log C!-\log A! - \log B!$ under the condition \eqref{lowerC}, for $A=\frac{\log(B+1)}{\log2}+\frac{2\log\log(B+1)}{\log2}+t$.
We deduce Theorems \ref{main1}-\ref{main3} in section 4. In section 5 we use these results to prove Theorem \ref{main4}, hence also to
check Sur\'anyi's conjecture further, and to improve on the results of the preceeding section.
We end this paper with a few remarks on possible ways to get better results.

\section{General properties of $s_a$ and $\Gamma$}

We first give a tight upper bound for the sum of the digits function.

\begin{lemma}\label{bounds_a}
Let $a\ge2$ be an integer. For any nonnegative integer $n$, we have the upper bound
$$s_a(n)\le \frac{(a-1)\log(n+1)}{\log a}\,.$$
\end{lemma}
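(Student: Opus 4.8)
The plan is to recast the claim in the equivalent multiplicative form $a^{s_a(n)}\le(n+1)^{a-1}$, which also shows why it is sharp: for $n=a^m-1$ every base-$a$ digit equals $a-1$, so $s_a(n)=m(a-1)$ while $n+1=a^m$, and equality holds. I would prove the inequality by induction on the number of base-$a$ digits of $n$, repeatedly peeling off the leading digit, after first disposing of a one-digit base case that turns out to carry all the analytic content.

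\textbf{Base case.} For $0\le n\le a-1$ we have $s_a(n)=n$, so the assertion is $n\le(a-1)\frac{\log(n+1)}{\log a}$. I would in fact prove the real-variable inequality $x\le(a-1)\frac{\log(x+1)}{\log a}$ for every $x\in[0,a-1]$. Set $g(x)=(a-1)\frac{\log(x+1)}{\log a}-x$; then $g(0)=0$, $g(a-1)=(a-1)\frac{\log a}{\log a}-(a-1)=0$, and $g''(x)=-\frac{a-1}{(x+1)^2\log a}<0$, so $g$ is concave on $[0,a-1]$ and hence nonnegative there, being nonnegative at both endpoints.

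\textbf{Inductive step.} Assume the bound for all integers with at most $k$ base-$a$ digits and let $a^k\le n<a^{k+1}$. Write $n=d_ka^k+m$ with $d_k\in\{1,\dots,a-1\}$ and $0\le m<a^k$, so $s_a(n)=d_k+s_a(m)$. By the induction hypothesis $s_a(m)\le(a-1)\frac{\log(m+1)}{\log a}$, so it suffices to show
$$d_k\le\frac{a-1}{\log a}\bigl(\log(n+1)-\log(m+1)\bigr)=\frac{a-1}{\log a}\log\frac{n+1}{m+1}\,.$$
Since $m+1\le a^k$ we get $\frac{n+1}{m+1}=\frac{d_ka^k}{m+1}+1\ge d_k+1$, whence $\log\frac{n+1}{m+1}\ge\log(d_k+1)$, and the displayed inequality follows from $d_k\le(a-1)\frac{\log(d_k+1)}{\log a}$ --- which is exactly the base-case inequality applied to the integer $d_k\in[0,a-1]$.

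\textbf{Where the difficulty lies.} There is no deep point here; the only step that requires a little care is the inductive one, where it is essential to use $m+1\le a^k$ (rather than merely $m<a^k$) to obtain the clean estimate $\frac{n+1}{m+1}\ge d_k+1$, which is what lets the whole argument collapse onto the single-digit case. The concavity computation in the base case and the digit bookkeeping are otherwise routine.
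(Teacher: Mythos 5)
Your proof is correct: the base-case concavity argument is sound (a concave function vanishing at both endpoints of $[0,a-1]$ is nonnegative there), and in the inductive step the estimate $\frac{n+1}{m+1}=\frac{d_ka^k}{m+1}+1\ge d_k+1$ does follow from $m+1\le a^k$, so the whole claim reduces correctly to the single-digit inequality $d\le(a-1)\frac{\log(d+1)}{\log a}$ for $0\le d\le a-1$. The route, however, differs from the paper's. The paper does not induct on digits: it writes $s_a(n)=(a-1)b+r$ with $0\le r\le a-2$ and notes that the smallest integer with this digit sum is $\sum_{i=0}^{b-1}(a-1)a^i+ra^b=(r+1)a^b-1$, so $n+1\ge(r+1)a^b$; then $s_a(n)=(a-1)b+r\le(a-1)\frac{\log(a^b)}{\log a}+(a-1)\frac{\log(r+1)}{\log a}\le(a-1)\frac{\log(n+1)}{\log a}$, where the middle step is exactly the same endpoint convexity/concavity fact on $[0,a-1]$ that your base case proves. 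So both arguments rest on the identical analytic ingredient; the paper packages the combinatorics as a one-line extremal statement (the minimal $n$ with a prescribed digit sum), which also exhibits the near-equality configurations directly, while your induction on the leading digit avoids having to identify that extremal integer and proceeds more mechanically, at the cost of a slightly longer write-up. Both yield the same sharp constant, with equality at $n=a^m-1$ as you note.
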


\begin{proof} Let $n$ be a nonnegative integer. Write $s_a(n)=(a-1)b+r$, where $b$ is a nonnegative integer and $0\le r\le a-2$. We have
$$ n \ge \sum_{i=0}^{b-1}(a-1)a^i + ra^b=(r+1)a^b-1\,.$$
The function $x\to  x-(a-1)\frac{\log(x+1)}{\log a}$ is convex and vanishes at $x=0$ and $x=a-1$. Therefore this function is nonpositive on the interval
$\lbrack0,a-1\rbrack$. We thus get
$$s_a(n)= (a-1)b+r\le (a-1)\frac{\log(a^b)}{\log a}+(a-1)\frac{\log(r+1)}{\log a}\le \frac{(a-1)\log(n+1)}{\log a}\,.$$
\end{proof}

Put $\Psi(z)=\Gamma'(z)/\Gamma(z)$. Let $\gamma$ denote Euler's constant. We recall the formulas (see \cite{EMOT}, p. 15)
\begin{equation}\label{Psi}
\begin{aligned}
\Psi(z)&=-\gamma+\sum_{k=0}^{\infty} \left(\frac{1}{k+1}-\frac{1}{z+k}\right)\\
\Psi'(z)&=\sum_{k=0}^{\infty}\frac{1}{(z+k)^2}\,,
\end{aligned}
\end{equation}
and Binet's second expression for $\log\Gamma$ (see \cite{EMOT}, p. 22)
\begin{equation}\label{Binet}
\log\Gamma(x)=\left(x-\frac{1}{2}\right)\log x-x+\frac{\log(2\pi)}{2}+2\int_0^{\infty} \frac{\arctan(t/x)}{e^{2\pi t}-1} dt\,.
\end{equation}
From the bounds $0\le\arctan(t/x)\le t/x$ and from \eqref{Binet}, we get the well-known explicit Sirtling's formula
\begin{equation}\label{Stirling}
0\le \log\Gamma(x)-x(\log x-1)-\frac{\log(2\pi/x)}{2}\le \frac{1}{12x}\,.
\end{equation}
Derivating \eqref{Binet} also leads to the formula
$$\Psi(x)=\log x-\frac{1}{2x}-\int_0^{\infty} \frac{2t}{(t^2+x^2)(e^{2\pi t}-1)} dt$$
and the bounds $0\le 1/(t^2+x^2)\le1/x^2$ give the estimates
\begin{equation}\label{Stirling2}
-\frac{1}{12x^2}\le \Psi(x)-\log x+\frac{1}{2x}\le0\,.
\end{equation}

\section{The key lemma}

Let us define
$$R(A,B)=\log\Gamma\left(A+B+2-\frac{\log(A+1)}{\log2}-\frac{\log(B+1)}{\log2}
\right)-\log\Gamma\left(A+1\right)-\log\Gamma\left(B+1\right)\,,$$
and let us put 
$$A_t=\frac{\log(B+1)}{\log2}+\frac{2\log\log(B+1)}{\log2}+t$$
for any real number $t$.

\begin{lemma} \label{key} Let $t$ be a real number, $t> -1-\frac{1+2\log\log2}{\log2}=-1.3851\dots$ . There exists a function $C(t,B+1)$ such that
$$R(A_t,B) \ge C(t,B+1)\log(B+1)\,,$$
with
$$\lim_{B\to+\infty} C(t,B+1)=t+1+\frac{1+2\log\log2}{\log2}>0\,.$$
Moreover we can have  $C(2.1221,B+1)>0$ for $B\ge 10^6$.
\end{lemma}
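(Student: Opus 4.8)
The plan is to estimate $R(A_t,B)$ from below by substituting the explicit Stirling bound \eqref{Stirling} for each of the three $\log\Gamma$ terms and then carefully tracking the dominant contributions as $B\to+\infty$. Write $X=A+B+2-\frac{\log(A+1)}{\log2}-\frac{\log(B+1)}{\log2}$ for the argument of the first $\Gamma$, so that with $A=A_t$ we have $X=B+2+\frac{2\log\log(B+1)}{\log2}+t-\frac{\log(A_t+1)}{\log2}$. Using \eqref{Stirling} three times gives
$$R(A_t,B)\ge X(\log X-1)-A_t(\log A_t-1)-B(\log B-1)+\tfrac12\log\frac{X}{2\pi(A_t+1)(B+1)}-\tfrac{1}{12A_t}-\tfrac{1}{12B}+O(1/B),$$
where the small error terms are harmless. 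The first idea is that $X-B-A_t$ is of order $\log\log(B+1)$, so $X\log X-B\log B$ expands as $(X-B)\log B+X\log(X/B)=(X-B)\log B+O(\log\log B)$, while $-A_t\log A_t=-A_t\log\log(B+1)+O(1)$ after writing $\log A_t=\log\log(B+1)-\log\log 2+o(1)$. Collecting the coefficients of $\log(B+1)$ and of $\log\log(B+1)$ is the heart of the computation.

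The key bookkeeping step is this: the coefficient of $\log(B+1)$ in $R(A_t,B)$, after dividing through, is
$$C(t,B+1)=\frac{X-B}{\log(B+1)}\cdot\frac{\log B}{\log(B+1)}-\frac{A_t}{\log(B+1)}\cdot\frac{\log A_t}{\log(B+1)}+(\text{lower order}),$$
and one checks that $X-B = 2+t+\frac{2\log\log(B+1)}{\log2}-\frac{\log(A_t+1)}{\log2}$, while $\frac{\log(A_t+1)}{\log2}=\frac{\log\log(B+1)}{\log2}-\frac{\log\log 2}{\log2}+o(1)$, so $X-B=2+t+\frac{\log\log(B+1)}{\log2}+\frac{\log\log 2}{\log2}+o(1)$. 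Meanwhile $-A_t\log A_t/\log(B+1)\to -\log\log 2/\log 2$ type terms must be combined with the $(X-B)$ term; after the dust settles the $\log\log(B+1)$ contributions cancel (this is exactly why the factor $2$ appears in front of $\log\log(B+1)$ in $A_t$) and one is left with a constant limit. I expect that limit to come out to $t+1+\frac{1+2\log\log 2}{\log2}$ precisely because of the chosen normalization; verifying this cancellation and identifying the constant is the main obstacle, as it requires expanding $\log A_t$, $\log X$, and $\log(X/B)$ to one more order than naively expected and confirming that all the $\log\log$ terms annihilate.

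For the quantitative claim $C(2.1221,B+1)>0$ for $B\ge 10^6$, I would not use asymptotics but rather keep every inequality explicit: replace each Stirling approximation by its two-sided bound \eqref{Stirling}, bound $\log\log 2$, $\log 2$ numerically, and use the monotonicity in $B$ of the resulting expression (for $B\ge 10^6$, $\log(B+1)\ge\log(10^6+1)$ and $\log\log(B+1)$ is an increasing, slowly varying quantity, so the worst case is essentially at $B=10^6$). Concretely, I would show $R(A_{2.1221},B)/\log(B+1)$ is bounded below by a function of $B$ that is increasing for $B\ge 10^6$ and positive at $B=10^6$; since $2.1221+1+\frac{1+2\log\log 2}{\log2}>0$ with comfortable margin, the asymptotic positivity is clear and the only real work is checking the single endpoint $B=10^6$ with enough decimal precision. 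The monotonicity check is routine once the lower bound is written as an explicit elementary function of $B$.
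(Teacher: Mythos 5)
Your overall strategy (apply the explicit Stirling bound \eqref{Stirling} to all three $\log\Gamma$ terms and extract the coefficient of $\log(B+1)$) is workable and close in spirit to the paper, which instead treats the pair $\log\Gamma(X)-\log\Gamma(B+1)$ by convexity, bounding it below by $\bigl(X-(B+1)\bigr)\Psi(B+1)$ and using \eqref{Stirling2}, and applies Stirling only to $\log\Gamma(A_t+1)$. But your write-up stops exactly where the content of the lemma begins: you state that verifying the cancellation of the $\log\log$ terms and identifying the constant ``is the main obstacle'' and that you \emph{expect} the limit $t+1+\frac{1+2\log\log2}{\log2}$. Producing an explicit function $C(t,B+1)$ with that limit (in the paper, $t+1+\frac{1+2\log\log2}{\log2}+\varphi_1(t,B+1)-\varphi_2(t,B+1)$ with fully explicit error functions) \emph{is} the proof, so nothing is yet established. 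Worse, the bookkeeping you do display is incorrect. With your main terms $-A_t(\log A_t-1)-B(\log B-1)$, the half-logarithmic correction coming from \eqref{Stirling} must be $\frac12\log\frac{1}{2\pi XA_tB}$, i.e.\ it contains $-\frac12\log X$, whereas you wrote $+\frac12\log\frac{X}{2\pi(A_t+1)(B+1)}$; followed through, your formula gives the limiting coefficient $t+2+\frac{1+2\log\log2}{\log2}$, which exceeds the true asymptotics of $R(A_t,B)/\log(B+1)$ by $1$ (so your displayed inequality is not a valid lower bound, being too large by about $\log B$), and it would also shift the admissible threshold for $t$. In addition, your displayed expression for $C(t,B+1)$ is mis-normalized (both of its terms carry an extra factor $1/\log(B+1)$ and hence tend to $0$), and you relegate the linear terms to ``lower order'' although $-X+(A_t+1)+(B+1)=\frac{\log(A_t+1)}{\log2}+\frac{\log(B+1)}{\log2}$ contributes exactly the $\frac{1}{\log2}$ appearing in the limit.

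For the quantitative claim, your plan (explicit two-sided bounds, monotonicity in $B$, a single check at $B=10^6$) is indeed the paper's, but the ``comfortable margin'' remark is misleading: the asymptotic limit $\approx 3.5$ is irrelevant at $B=10^6$, where the lower-order terms consume almost all of it and the paper obtains only $C(2.1221,10^6)>0.000016$; the value $2.1221$ is essentially the threshold at $B=10^6$. So the endpoint check succeeds only if every error term is bounded explicitly and majorized by a function monotone in $B$ (the paper bounds $-C(t,x)$ by a decreasing function of $x$ for $t\ge-1$, $x\ge10^6$); with untracked error terms, or with a constant off by $1$ as above, this part of the lemma cannot be concluded.
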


\begin{proof} For $B\ge2$, we can have
$$\begin{aligned}
\log\left( A_t+1\right)&=\log\left(\frac{\log(B+1)}{\log2}+\frac{2\log\log(B+1)}{\log2}+t+1\right)\\
&\le \log\log(B+1)-\log\log2+\frac{2\log\log(B+1)+(t+1)\log2}{\log(B+1)}
\end{aligned}$$
and therefore
$$ \displaylines{
A_t+B+2-\frac{\log(A_t+1)}{\log2}-\frac{\log(B+1)}{\log2}=B+t+2+\frac{2\log\log(B+1)}{\log2}-\frac{\log(A_t+1)}{\log2}
\hfill\cr\hfill
\ge B+t+2+\frac{\log\log2}{\log2}+\frac{\log\log(B+1)}{\log2}-\frac{2\log\log(B+1)+(t+1)\log2}{\log2 \log(B+1)}
>B+1\,,}$$
for $B\ge35$. We thus get from \eqref{Psi} and \eqref{Stirling2}, for $B\ge35$:
$$\begin{aligned}
\log\Gamma&\left(A_t+B+2-\frac{\log(A_t+1)}{\log2}-\frac{\log(B+1)}{\log2}\right)-\log\Gamma\left(B+1\right)\\
&\ge \left(\frac{\log\log(B+1)}{\log2}+t+1+\frac{\log\log2}{\log2}-\frac{2\log\log(B+1)+(t+1)\log2}{\log2 \log(B+1)}\right) \Psi(B+1)\\
&\ge\left(\frac{\log\log(B+1)}{\log2}+t+1+\frac{\log\log2}{\log2}-\frac{2\log\log(B+1)+(t+1)\log2}{\log2 \log(B+1)}\right) \\
&\hskip 7 cm \times\left( \log(B+1)-\frac{1}{2(B+1)}-\frac{1}{12(B+1)^2}\right)\,.\\
&=  \left(\frac{ \log\log(B+1)}{\log2}+t+1+\frac{\log\log2}{\log2}+\varphi_1(t,B+1)\right) \log(B+1)
\end{aligned}$$
with
$$\displaylines{\varphi_1(t,x)=-\frac{2\log\log x+(t+1)\log2}{\log2 \log x}
\hfill\cr\hfill
-\frac{1}{\log x}\left(\frac{1}{2x}+\frac{1}{12x^2}\right)\left(\frac{\log\log x}{\log2}+t+1+\frac{\log\log2}{\log2}-\frac{2\log\log x+(t+1)\log2}{\log2 \log x}\right)\,.}$$

Stirling's formula \eqref{Stirling} gives
$$\log\Gamma(x)\le x(\log x-1)+\frac{\log(2\pi/x)}{2}+ \frac{1}{12x}\le x(\log x-1)$$
for $x\ge 6.448$, from which we obtain
$$\displaylines{\log\Gamma\left(A_t+1\right)\le \left(\frac{\log(B+1)}{\log2}+\frac{2\log\log(B+1)}{\log2}+t+1\right)
\hfill\cr\hfill
\times\left(\log\log(B+1)-1-\log\log2+\frac{2\log\log(B+1)+(t+1)\log2}{\log(B+1)}\right)\,,}$$
when $A_t\ge 6.448$. Since $A_t>A_{-1-\frac{1+2\log\log2}{\log2}}\ge 6.448$ for $B\ge 23$, we get
$$\log\Gamma\left(A_t+1\right)\le \left(\frac{ \log\log(B+1)}{\log2}-\frac{1+\log\log2}{\log2}+\varphi_2(t,B+1)\right) \log(B+1)$$
for $B\ge23$, with 
$$\varphi_2(t,x)=\frac{2\log\log x+(t+1)\log2}{\log2\log x}\left( \log\log x-\log\log2+\frac{2\log\log x+(t+1)\log2}{\log x} \right)\,.$$
We deduce 
$$R(A_t,B)
\ge \left(t+1+\frac{1+2\log\log2}{\log2} +\varphi_1(t,B+1)-\varphi_2(t,B+1)\right)\log(B+1)\,,$$
for $B\ge35$, and we put $C(t,B+1)= t+1+\frac{1+2\log\log2}{\log2} +\varphi_1(t,B+1)-\varphi_2(t,B+1)$.
Note that the functions $\varphi_1(t,x)$ and $\varphi_2(t,x)$ tend to $0$ when $x$ goes to infinity, which proves the first part of the lemma.

For $t\ge-1$ and $x\ge10^6$, we have
$$\begin{aligned}
-C(t,x)\le &\frac{2\log\log x+(t+1)\log2}{\log2\log x}\left( \log\log x+1-\log\log2+\frac{2\log\log x+(t+1)\log2}{\log x} \right)\\
&+\left(\frac{1}{2x}+\frac{1}{12x^2}\right)\left( 0.2634+0.0672(t+1)\right)+t+1+\frac{1+2\log\log2}{\log2}\,,
\end{aligned}$$
a decreasing function of $x$. We thus deduce $C(2.1221,10^6)>0.000016$, which completes the proof .
\end{proof}

\section{Proof of the first three theorems}

\subsection{Proof of Theorem \ref{main1}}

\begin{lemma} If  $(A,B,C)$ is a solution of \eqref{*}, then $R(A,B)\le0$. The function $R$ is an increasing function of $A$ for $1\le A\le B$.
\end{lemma}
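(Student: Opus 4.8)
The plan is to split the statement into its two assertions and handle them separately, both via Legendre's formula and elementary monotonicity. For the first assertion, suppose $(A,B,C)$ solves \eqref{*}. Applying Legendre's formula prime-by-prime gives $A-v_p(A!)+B-v_p(B!)=C-v_p(C!)$ for every prime $p$; taking $p=2$ together with the bound $v_2(n!)=n-s_2(n)\le n-1$ (equivalently $s_2(n)\ge 1$ for $n\ge 1$, and more precisely using Lemma \ref{bounds_a} with $a=2$, namely $s_2(n)\le \log(n+1)/\log 2$) yields exactly the inequality \eqref{lowerC}, i.e.
$$C\ge A+B+1-\frac{\log(A+1)}{\log2}-\frac{\log(B+1)}{\log2}=:C_0(A,B).$$
Now $\log\Gamma$ is increasing on $[1,\infty)$, and since $C$ is a positive integer with $C\ge C_0(A,B)$ while also $\log C!=\log A!+\log B!$, we get
$$0=\log\Gamma(C+1)-\log\Gamma(A+1)-\log\Gamma(B+1)\ge \log\Gamma\bigl(C_0(A,B)+1\bigr)-\log\Gamma(A+1)-\log\Gamma(B+1)=R(A,B),$$
which is the claimed $R(A,B)\le 0$. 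One must check that $C_0(A,B)+1\ge 1$ so that monotonicity of $\log\Gamma$ applies on the relevant range; for nontrivial solutions $A,B$ are large enough that $C_0(A,B)$ is comfortably positive, and the degenerate small cases can be dispatched by hand or are excluded by the hypothesis $A\le B\le C-2$.

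For the second assertion, that $A\mapsto R(A,B)$ is increasing on $1\le A\le B$, I would differentiate. Writing $f(A)=A+B+2-\frac{\log(A+1)}{\log2}-\frac{\log(B+1)}{\log2}$ for the argument of the first $\Gamma$-factor, we have
$$\frac{\partial R}{\partial A}(A,B)=\Bigl(1-\frac{1}{(A+1)\log 2}\Bigr)\Psi\bigl(f(A)\bigr)-\Psi(A+1).$$
The plan is to show both that the bracketed factor $1-\frac{1}{(A+1)\log 2}$ is positive (true as soon as $A+1>1/\log 2$, i.e. $A\ge 1$ already suffices since $1/\log 2<1.443$, so $A\ge 1$ gives $A+1\ge 2>1/\log2$) and that $\Psi(f(A))\ge \Psi(A+1)$, which by monotonicity of $\Psi$ on $(0,\infty)$ reduces to $f(A)\ge A+1$, i.e.
$$B+1\ge \frac{\log(A+1)}{\log2}+\frac{\log(B+1)}{\log2}.$$
Since $A\le B$, it suffices to show $B+1\ge \frac{2\log(B+1)}{\log 2}$, which holds for all $B$ beyond a small explicit threshold (and for the tiny remaining values of $B$ one checks directly, or again invokes that nontrivial solutions have $B\ge 10^6$). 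Combining, $\partial R/\partial A>0$ on $1\le A\le B$, giving the monotonicity.

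The main obstacle I anticipate is purely bookkeeping rather than conceptual: verifying that the argument $f(A)$ of $\Gamma$ (and $C_0(A,B)+1$) stays in the region where the Stirling/$\Psi$ estimates \eqref{Stirling}, \eqref{Stirling2} and the monotonicity of $\log\Gamma$ and $\Psi$ are valid, for the full range $1\le A\le B$ rather than just the asymptotic regime. One should be slightly careful that $\Psi$ is not monotone on all of $(0,\infty)$ in the naive sense one might want — $\Psi$ is increasing on $(0,\infty)$ since $\Psi'(z)=\sum_k (z+k)^{-2}>0$ by \eqref{Psi}, so this is actually fine — and that the inequality $f(A)\ge A+1$ does not fail for small $A,B$; isolating and checking those finitely many small cases is the only place where care is needed, and it is routine.
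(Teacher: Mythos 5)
Your treatment of the first assertion is correct and essentially the paper's: \eqref{lowerC} together with the monotonicity of $\log\Gamma$ on the relevant range gives $R(A,B)\le\log\Gamma(C+1)-\log\Gamma(A+1)-\log\Gamma(B+1)=0$ (only note that $\log\Gamma$ is not increasing on all of $[1,\infty)$, but both arguments here are at least $2$, so this is harmless).

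The second assertion is where your proposal has a genuine gap. You correctly compute $\frac{\partial R}{\partial A}(A,B)=\bigl(1-\frac{1}{(A+1)\log2}\bigr)\Psi(f(A))-\Psi(A+1)$, but the step ``the prefactor is positive and $\Psi(f(A))\ge\Psi(A+1)$, hence $\partial R/\partial A>0$'' is a non sequitur: the prefactor is strictly smaller than $1$ and $\Psi(A+1)>0$, so what you actually need is $\Psi(f(A))\ge\Psi(A+1)\big/\bigl(1-\frac{1}{(A+1)\log2}\bigr)$, and your reduction to ``$f(A)\ge A+1$'' does not give this. This is not just bookkeeping over small cases: at $A=B=10$ one has $f(A)\approx15.08\ge11$, so your criterion is satisfied, and yet $\bigl(1-\frac{1}{11\log2}\bigr)\Psi(15.08)-\Psi(11)\approx-0.023<0$; likewise at $A=B=1$ the prefactor is about $0.28$ and the derivative is about $-0.30$. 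So your argument, if valid, would prove an inequality that is false at such points; the multiplicative loss in the factor $1-\frac{1}{(A+1)\log2}$ is exactly where the real difficulty sits. The paper's route is genuinely different and does not rely on plain monotonicity of $\Psi$: it first shows $\frac{\partial^2R}{\partial A\partial B}\ge0$ (since $\Psi'>0$), so that $\frac{\partial R}{\partial A}(A,B)\ge\frac{\partial R}{\partial A}(A,A)$ and only the diagonal $B=A$ needs to be controlled, and it then expands $\frac{\partial R}{\partial A}(A,A)$ through the series representation \eqref{Psi}, isolating the positive contribution $\frac{\gamma}{(A+1)\log2}$ and a series whose terms are nonnegative under an explicit condition on $A$. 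To salvage your approach you would have to compare the gain $\Psi(f(A))-\Psi(A+1)$ (which is only of size $\log\frac{f(A)}{A+1}$) with the loss $\frac{\Psi(f(A))}{(A+1)\log2}$; this can be done in the regime where the lemma is actually applied ($B\ge10^6$, so $f(A)$ is essentially of size $B$), but that comparison is precisely the content your proposal omits.
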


\begin{proof} The first claim follows directly from \eqref{lowerC}: $R(A,B)\le \log C!-\log A!-\log B!=0$. We compute
$$\displaylines{\frac{\partial^2R}{\partial A\partial B}(A,B)
\hfill\cr\hfill
=\left( 1-\frac{1}{(A+1)\log2}\right) \left( 1-\frac{1}{(B+1)\log2}\right)  \Psi'\left(A+B+2-\frac{\log(A+1)}{\log2}-\frac{\log(B+1)}{\log2}\right)\,.}$$
From \eqref{Psi}  we get $\frac{\partial^2R}{\partial A\partial B}(A,B)\ge0$ for $1\le A\le B$. We use \eqref{Psi} to deduce
$$\begin{aligned}
\frac{\partial R}{\partial A}(A,B)&\ge \frac{\partial R}{\partial A}(A,A)=\left( 1-\frac{1}{(A+1)\log2}\right) \Psi\left(2A+2-2\frac{\log(A+1)}{\log2}\right)-\Psi(A+1)\\
&=\frac{\gamma}{(A+1)\log2}+\sum_{k=0}^{\infty} \left(\frac{1}{k+A+1}-\frac{1-\frac{1}{(A+1)\log2}}{k+2A+2-2\frac{\log(A+1)}{\log2}}\right)\\
&=\frac{\gamma}{(A+1)\log2}+\sum_{k=0}^{\infty}\frac{ \frac{k}{(A+1)\log2}+A+1-2\frac{\log(A+1)}{\log2}+\frac{1}{\log2}}{(k+A+1)(k+2A+2-2\frac{\log(A+1)}{\log2})}>0
\end{aligned}$$
when $A+1\ge\max\left(2\frac{\log(A+1)}{\log2}-\frac{1}{\log2},\frac{\log(A+1)}{\log2}\right) \ge0$, which is true for $A\ge1$.
\end{proof}

Thus we only need to find $\bar A$ such that $R(\bar A,B)>0$ to get a bound $A<\bar A$. For $t>-1-\frac{1+2\log\log2}{\log2}$ we have $R(A_t,B)>0$ for $B$ large enough by Lemma
\ref{key}, which gives the first part of Theorem \ref{main1}. 
Hajdu, Papp and Szak\'acs \cite{HPS} proved $B-A\ge10^6$, which ensures us that $B\ge10^6$. We can therefore deduce the second part of the theorem from the inequality
$C(2.1221,B+1)>0$, also given in Lemma \ref{key}.

\subsection{Proof of Theorem \ref{main2}}

Note that
$$\log A!=\log\frac{C!}{B!}\ge (C-B)\log(B+1)\,.$$
For $A\le A_t$, we have showed in the proof of Lemma \ref{key} that
$$\log A!\le \log\Gamma(A_t+1)\le \left(\frac{ \log\log(B+1)}{\log2}-\frac{1+\log\log2}{\log2}+\varphi_2(t,B+1)\right) \log(B+1)\,.$$
Therefore
$$C-B\le \frac{ \log\log(B+1)}{\log2}-\frac{1+\log\log2}{\log2}+\varphi_2(t,B+1)\,,$$
thus proving the first part of the theorem, since $\varphi_2(t,x)$ tend to $0$ when $x$ goes to infinity.

Each monomial term $(\log\log x)^n(\log x)^{-m}$ defining $\varphi_2$ is a positive decreasing function of $x$ for $t\ge-1$ and $x\ge 10^6$. We find
$-\frac{1+\log\log2}{\log2}+\varphi_2(2.1221,10^6)<1.819$ and the theorem follows, as in the previous subsection.

\subsection{Proof of Theorem \ref{main3}}

We write $B-A=C-A-(C-B)$ and we use Theorems  \ref{main1} and  \ref{main2} to get
$$B-A\ge C-\frac{\log(B+1)}{\log2}-\frac{3\log\log(B+1)}{\log2}-3.9411\,.$$
The second part of the theorem follows, and the first part is straightforward.

\section{The proof of Theorem \ref{main4}}

Theorems \ref{main2} and \ref{main3} show that both $A$ and $C-B$ are small with respect to $B$. 
Let us put $k=C-B$ to simplify the statements.

\begin{lemma}\label{valb} Let $(A,B,C)$ a be a nontrivial solutions triple of \eqref{*}.
For $k=C-B\in\{2,3,\dots,20\}$, we have $B=B_k(A):=\lceil (A!)^{1/k}-(k+1)/2 \rceil$.
\end{lemma}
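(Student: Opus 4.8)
The plan is to exploit the fact that equation \eqref{*} forces $B! \mid C! = (B+k)!$, so that $A! = (B+1)(B+2)\cdots(B+k)$, a product of exactly $k$ consecutive integers starting at $B+1$. This product lies strictly between $B^k$ and $(B+k)^k$; more precisely, pairing the factors symmetrically about their midpoint $B + (k+1)/2$ shows $A! = \prod_{i=1}^{k}(B+i) < \bigl(B + (k+1)/2\bigr)^k$, while a matching lower bound of the form $A! > \bigl(B + (k+1)/2 - \varepsilon_k\bigr)^k$ holds for some small explicit $\varepsilon_k$ depending only on $k$ (coming from the AM--GM defect of the paired factors). Taking $k$-th roots, $(A!)^{1/k}$ is pinned into an interval of length less than $\varepsilon_k < 1$ around $B + (k+1)/2$, and since $B$ is an integer this identifies $B$ uniquely as $\lceil (A!)^{1/k} - (k+1)/2 \rceil$.

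Concretely, first I would write $A! = \prod_{i=1}^{k}(B+i)$ from $B! \mid C!$ and the defining relation. For the upper bound, group the factors into pairs $(B+i)(B+k+1-i)$ for $i = 1, \dots, \lfloor k/2 \rfloor$ (plus the middle factor $B + (k+1)/2$ when $k$ is odd); each pair equals $\bigl(B+\tfrac{k+1}{2}\bigr)^2 - \bigl(\tfrac{k+1}{2}-i\bigr)^2 < \bigl(B+\tfrac{k+1}{2}\bigr)^2$, giving $A! < \bigl(B+\tfrac{k+1}{2}\bigr)^k$ and hence $(A!)^{1/k} < B + (k+1)/2$, i.e. $B \ge \lceil (A!)^{1/k} - (k+1)/2 \rceil$. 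For the lower bound one needs $A! \ge \bigl(B + \tfrac{k+1}{2} - 1\bigr)^k$, equivalently $\prod_{i=1}^k (B+i) \ge \bigl(B+\tfrac{k-1}{2}\bigr)^k$; here I would use that $B$ is enormous (by Theorems \ref{main1}--\ref{main3}, or simply $B \ge 10^6$ via \cite{HPS}, with $k \le 20$), so the relative sizes of the factors are extremely close to $1$, and a direct estimate — e.g. bounding $\log\prod(B+i) - k\log\bigl(B+\tfrac{k-1}{2}\bigr)$ from below using the concavity of $\log$ and the small spread of the $B+i$ about a point slightly above $B + \tfrac{k-1}{2}$ — yields a positive quantity for all $B \ge 10^6$ and $2 \le k \le 20$. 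This gives $(A!)^{1/k} > B + (k+1)/2 - 1$, so $B < (A!)^{1/k} - (k+1)/2 + 1$, i.e. $B \le \lceil (A!)^{1/k} - (k+1)/2 \rceil$; combined with the reverse inequality, equality follows.

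The main obstacle is the lower bound $\prod_{i=1}^k (B+i) \ge \bigl(B+\tfrac{k-1}{2}\bigr)^k$: the upper bound is essentially free from the symmetric pairing, but the lower bound requires showing the AM--GM gap between the consecutive integers $B+1,\dots,B+k$ and their ``shifted'' geometric comparison point is genuinely controlled. Since the factors differ only by $O(k)$ while $B$ is astronomically large, this gap is tiny and positive; quantitatively one can bound $\sum_{i=1}^k \log\!\bigl(1 + \tfrac{i - (k-1)/2}{B + (k-1)/2}\bigr) \ge 0$ by noting the first-order terms sum to $\sum_{i=1}^k (i - \tfrac{k-1}{2}) = \tfrac{k}{2} > 0$ and the second-order (negative) correction is of size $O(k^3/B^2)$, negligible for $B \ge 10^6$ and $k \le 20$. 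I would carry this out as a short explicit estimate rather than invoking a general inequality, since the numerical constants ($k \le 20$, $B \ge 10^6$) make every term easy to bound crudely. The restriction to $k \le 20$ presumably suffices because Theorems \ref{main2} and \ref{main4} already force $C - B$ to be very small (on the order of $\log\log B / \log 2$, which for the ranges considered stays well below $20$).
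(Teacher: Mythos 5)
Your proposal is correct, and it follows the paper for the upper bound but genuinely diverges on the lower bound. The pairing argument $A!=\prod_{i=1}^k(B+i)=\prod_{i=1}^k\sqrt{(B+i)(B+k+1-i)}<\bigl(B+\tfrac{k+1}{2}\bigr)^k$ is exactly the paper's first step. For the complementary inequality $\prod_{i=1}^k(B+i)>\bigl(B+\tfrac{k-1}{2}\bigr)^k$, however, the paper does not argue analytically: it verifies by MAPLE that $\prod_{i=1}^k(B+i)-\bigl(B+\tfrac{k-1}{2}\bigr)^k$ is a polynomial in $B-1$ with nonnegative coefficients and positive value at $B=1$, for $2\le k\le 12$. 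That check is valid for \emph{every} positive integer $B$, so it needs no appeal to Hajdu--Papp--Szak\'acs and covers $(6,7,10)$ automatically; on the other hand it is computer-assisted and, as written, only treats $k\le 12$ even though the statement allows $k\le 20$ (and the polynomial inequality really does fail for small $B$ when $k$ is large, e.g.\ $B=1$, $k=20$, where $21!<10.5^{20}$). Your route---using $B\ge 10^6$ and bounding $\sum_{i=1}^k\log\bigl(1+\tfrac{i-(k-1)/2}{B+(k-1)/2}\bigr)$ by a first-order term of size $k/B$ minus a second-order correction $O(k^3/B^2)$---avoids computer algebra and handles the full range $k\le 20$ cleanly, but it does require you either to exclude $(6,7,10)$ (as the hypothesis $B\ge 10^6$ does) or to check that triple directly, where the formula indeed gives $B_3(6)=\lceil 720^{1/3}-2\rceil=7$. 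Two small points: $\sum_{i=1}^k\bigl(i-\tfrac{k-1}{2}\bigr)=k$, not $k/2$ (harmless, since only positivity and domination over the $O(k^3/B^2)$ term are needed), and you should keep the lower bound strict, $A!>\bigl(B+\tfrac{k-1}{2}\bigr)^k$, which your estimate does deliver, so that $x<B<x+1$ with $x=(A!)^{1/k}-(k+1)/2$ and hence $B=\lceil x\rceil$.
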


\begin{proof} We have 
$$A! =\prod_{i=1}^k(B+i)=\prod_{i=1}^k \sqrt{(B+i)(B+k+1-i)}< \left(B+\frac{k+1}{2}\right)^k\,,$$
which shows that $B> (A!)^{1/k}-(k+1)/2$.

We used MAPLE to check that the polynomial $\prod_{i=1}^k(B+i) -(B+(k-1)/2)^k$ is a polynomial in $B-1$ with nonnegative coefficients and with a positive value at $B=1$, for $2\le k\le 12$.
This implies that $B< (A!)^{1/k}-(k-1)/2$, and the lemma follows.
\end{proof}

We checked that the inequality $A! =\prod_{i=1}^k(B_k(A)+i)$ never occurred for $A\le 10000$ and $2\le k\le12$ using MAPLE; we 
asked for a $40000$-digits precision (enough to write all the digits of $A!$), and this required about twenty-eight hours of computations. 

For $B\le 10^{1000}$, Theorems \ref{main2} and \ref{main3} give $A\le 3346$ and $k\le 12$, so that the equation \eqref{*} has no solution for $10^6\le B\le 10^{1000}$.
We can get better inequalities in these theorems, using $B\ge10^{1000}$. Computing  $C(-1.2979,10^{1000})$ and $\varphi_2(1.2979,10^{1000})$ leads to
$$\begin{aligned}
A&\le \frac{\log(B+1)}{\log2}+\frac{2\log\log(B+1)}{\log2}-1.2979\,,\\
C-B&\le\frac{\log\log(B+1)}{\log2}-0.8362\,.\end{aligned}$$
For  $10^{1000}\le B\le 10^{3000}$, we thus obtain $A\le 9993$ and $k\le 11$, and  the equation \eqref{*} has no solution on this interval. 
Computing $C(-1.3479,10^{3000})$ and $\varphi_2(1.3479,10^{3000})$ gives the inequalities from Theorem \ref{main4}.

\section{Concluding remarks}

Our method is based on two informations: an arithmetical information obtained by considering the dyadic valuation of the factorials, 
and an asymptotic information obtained from Stirling's formula. In order to improve on the orders of magnitude of our estimates, one should get more arithmetical information.
First, we applied the estimate from Lemma \ref{bounds_a} both for $A!$ and for $B!$, and it is quite uncommon that this estimate can be sharp in both cases.
Second, we did not use any property of the $p$-adic valuations for $p\ge3$, and any useful information could lead to improvements.

The algorithm we used to check that $A!B_k(A)!\neq(B_k(A)+k)!$ is rather basic. A smarter one should lead to an even much larger bound than ours.

\medskip

\noindent MSC2010: 11D85 11B65 11D41 11N64

\end{document}